\newcommand{\citep}{\cite} 
\newcommand{\cf}[0]{\textit{cf}.\ } 
\newcommand{\Section}[0]{\S} 
\renewcommand{\emph}[1]{\textit{#1}}
\theoremstyle{plain}
\newtheorem{theorem}{Theorem}[section]
\newtheorem{cor}[theorem]{Corollary}
\newtheorem{prop}[theorem]{Proposition}
\theoremstyle{definition}
\newtheorem{definition}[theorem]{Definition}
\newtheorem{example}[theorem]{Example}
\theoremstyle{remark}
\newtheorem{remark}[theorem]{Remark}
\setlist[itemize]{leftmargin=0.35in}
\newcommand{\gkpSI}[2]{\ensuremath{\genfrac{\lbrack}{\rbrack}{0pt}{}{#1}{#2}}} 
\newcommand{\gkpSII}[2]{\ensuremath{\genfrac{\lbrace}{\rbrace}{0pt}{}{#1}{#2}}} 
\newcommand{\gkpEII}[2]{\ensuremath{\left\langle\genfrac{\langle}{\rangle}{
            0pt}{}{#1}{#2}\right\rangle}} 
\newcommand{\FcfII}[3]{\ensuremath{\gkpSI{#2}{#3}_{#1}}} 
\newcommand{\Iverson}[1]{\ensuremath{\left[#1\right]_{\delta}}} 
\DeclareMathOperator{\numpoly}{num} 
\DeclareMathOperator{\denompoly}{denom} 
\title[Combinatorial Identities for Generalized Stirling Numbers]{
       Combinatorial Identities for 
       Generalized Stirling Numbers Expanding 
       $f$-Factorial Functions and the $f$-Harmonic Numbers} 
\author[Maxie D. Schmidt]{Maxie D. Schmidt \\ \\ 
        School of Mathematics \\ 
        Georgia Institute of Technology \\ 
        117 Skiles Building \\ 
        686 Cherry Street NW \\ 
        Atlanta, GA 30332 \\ \\ 
        \texttt{maxieds@gmail.com}}         
\address{School of Mathematics, Georgia Institute of Technology, Atlanta, GA 30332}
\email{maxieds@gmail.com} 
\thanks{} 
\date{2017.03.29-v1}
\keywords{factorial; multifactorial; j-factorial; Pochhammer symbol; 
          Stirling number; generalized Stirling number; 
          harmonic number; f-harmonic number; Stirling polynomial. } 
\subjclass[2010]{11B73; 05A10; 11B75.} 
\begin{document}

\begin{abstract}
We introduce a class of $f(t)$-factorials, or $f(t)$-Pochhammer symbols, that 
includes many, if not most, well-known factorial and multiple factorial 
function variants as special cases. We consider the combinatorial properties 
of the corresponding generalized classes of 
Stirling numbers of the first kind which arise as the coefficients of the 
symbolic polynomial expansions of these $f$-factorial functions. 
The combinatorial properties of these more general 
parameterized Stirling number triangles we prove within the article 
include analogs to known expansions of the ordinary Stirling numbers by 
$p$-order harmonic number sequences through the definition of a 
corresponding class of $p$-order $f$-harmonic numbers. 
\end{abstract} 

\maketitle

\section{Introduction} 

\subsection{Generalized $f$-factorial functions} 

\subsubsection*{Definitions} 

For any function, $f: \mathbb{N} \rightarrow \mathbb{C}$, and fixed 
non-zero indeterminates $x, t \in \mathbb{C}$, we introduce and define the 
\emph{generalized $f(t)$-factorial function}, or alternately the 
\emph{$f(t)$-Pochhammer symbol}, denoted by $(x)_{f(t),n}$, 
as the following products: 
\begin{align} 
\label{eqn_xft_gen_PHSymbol_def} 
(x)_{f(t),n} & = 
     \prod_{k=1}^{n-1} \left(x + \frac{f(k)}{t^k}\right). 
\end{align} 
Within this article, 
we are interested in the combinatorial properties of the coefficients of the 
powers of $x$ in the last product expansions which we consider to be 
generalized forms of the \emph{Stirling numbers of the first kind} in 
this setting. 
Section \ref{subSection_Intro_GenSNumsDefs} defines generalized 
Stirling numbers of both the first and second kinds and motivates the 
definitions of auxiliary triangles by special classes of formal power series 
generating function transformations and their corresponding 
negative-order variants considered in the references 
\citep{GFTRANS2016,GFTRANSHZETA2016}. 

\subsubsection*{Special cases} 

Key to the formulation of applications and interpreting the generalized 
results in this article is the observation that the 
definition of \eqref{eqn_xft_gen_PHSymbol_def} provides an effective 
generalization of almost all other related factorial function variants 
considered in the references when $t \equiv 1$. 
The special cases of $f(n) := \alpha n+\beta$ for some 
integer-valued $\alpha \geq 1$ and $0 \leq \beta < \alpha$ lead to the 
motivations for studying these more general factorial functions in 
\citep{GFTRANSHZETA2016}, and form the expansions of multiple 
$\alpha$-factorial functions, $n!_{(\alpha)}$, studied in the 
triangular coefficient expansions defined by 
\citep{MULTIFACT-CFRACS,MULTIFACTJIS}. 
The \emph{factorial powers}, or 
\emph{generalized factorials of $t$ of order $n$ and increment $h$}, 
denoted by $t^{(n, h)}$, or the \emph{Pochhammer k-symbol} denoted by 
$(x)_{n,h} \equiv p_n(h, t) = t(t+h)(t+2h)\cdots(t+(n-1)h)$, studied in 
\citep{q-DIFFSq-FACTS,MULTIFACT-CFRACS,CK} form particular special cases, 
as do the the forms of the generalized \emph{Roman factorials} and 
\emph{Knuth factorials} for $n \geq 1$ defined in \citep{LOEBBINOM}, and the 
\emph{$q$-shifted factorial functions} considered in 
\citep{q-SHIFTEDFACTS,q-DIFFSq-FACTS}. 
When $(f(n), t) \equiv (q^{n+1}, 1)$ these products are related to the 
expansions of the finite cases of the \emph{$q$-Pochhammer symbol} products, 
$(a; q)_n = (1-a)(1-aq)\cdots(1-aq^{n-1})$, and the corresponding 
definitions of the generalized Stirling number triangles defined in 
\eqref{eqn_genS1ft_rec_def} of the next subsection are precisely the 
\emph{Gaussian polynomials}, or \emph{$q$-binomial coefficients}, 
studied in relation to the $q$-series expansions and 
$q$-hypergeometric functions in \citep[\S 17]{NISTHB}. 

\subsubsection*{New results proved in the article} 

The results proved within this article, for example, provide new 
expansions of these special factorial functions in terms of their 
corresponding \emph{$p$-order $f$-harmonic number sequences}, 
\[
F_n^{(p)}(t) := \sum_{k \leq n} \frac{t^k}{f(k)^p}, 
\]
which generalize known expansions of Stirling numbers by the ordinary 
\emph{$p$-order harmonic numbers}, 
$H_n^{(p)} \equiv \sum_{1 \leq k \leq n} k^{-r}$, in 
\citep{STIRESUMS,MULTIFACTJIS,GFTRANS2016,GFTRANSHZETA2016}. 
Still other combinatorial sums and properties satisfied by the 
symbolic polynomial expansions of these special case factorial functions 
follow as corollaries of the new results we prove in the next sections. 
The next subsection precisely expands the generalized factorial expansions 
of \eqref{eqn_xft_gen_PHSymbol_def} through the generalized class of 
Stirling numbers of the first kind defined recursively by 
\eqref{eqn_genS1ft_rec_def} below. 

\subsection{Definitions of generalized $f$-factorial Stirling numbers} 
\label{subSection_Intro_GenSNumsDefs} 

We first employ the next recurrence relation to define the 
generalized triangle of Stirling numbers of the first kind, 
which we denote by $\gkpSI{n}{k}_{f(t)} := [x^{k-1}] (x)_{f(t),n}$, 
or just by $\gkpSI{n}{k}_f$ when the 
context is clear, for natural numbers $n, k \geq 0$
\citep[\cf \S 3.1]{MULTIFACTJIS} \footnote{ 
     The bracket symbol $\Iverson{\mathtt{cond}}$ denotes 
     \emph{Iverson's convention} which evaluates to exactly one of the 
     values in $\{0, 1\}$ and where $\Iverson{\mathtt{cond}} = 1$ 
     if and only if the condition $\mathtt{cond}$ is true. 
}. 
\begin{align} 
\label{eqn_genS1ft_rec_def} 
\gkpSI{n}{k}_{f(t)} & = f(n-1) \cdot t^{1-n} \gkpSI{n-1}{k}_{f(t)} + 
     \gkpSI{n-1}{k-1}_{f(t)} + \Iverson{n = k = 0} 
\end{align} 
We also define the corresponding generalized forms of the 
\emph{Stirling numbers of the second kind}, denoted by 
$\gkpSII{n}{k}_{f(t)}$, so that we can consider inversion relations and 
combinatorial analogs to known identities for the ordinary triangles by the sum 
\begin{align*} 
\gkpSII{n}{k}_{f(t)} & = 
     \sum_{j=0}^{k} \binom{k}{j} \frac{(-1)^{k-j} f(j)^n}{t^{jn} \cdot j!}, 
\end{align*} 
from which we can prove the following form of a particularly useful 
generating function transformation motivated in the references 
when $f(n)$ has a Taylor series expansion in integral powers of $n$ about zero 
\citep[\cf \S 3.3]{MULTIFACTJIS} \citep[\cf \S 7.4]{GKP} 
\citep{SQSERIESMDS,GFTRANSHZETA2016}: 
\begin{align} 
\label{eqn_S2ft_GFTrans_geom_series_exp} 
\sum_{0 \leq j \leq n} \frac{f(j)^k}{t^{jk}} z^j & = 
     \sum_{0 \leq j \leq k} \gkpSII{k}{j}_{f(t)} z^j \times 
     D_z^{(j)}\left[\frac{1-z^{n+1}}{1-z}\right]. 
\end{align} 
The negative-order cases of the infinite series transformation in 
\eqref{eqn_S2ft_GFTrans_geom_series_exp} are motivated in 
\citep{GFTRANSHZETA2016} where we define modified forms of the 
Stirling numbers of the second kind by 
\begin{align*} 
\gkpSII{k}{j}_{f^{\ast}} & = 
     \sum_{1 \leq m \leq j} 
     \binom{j}{m} \frac{(-1)^{j-m}}{j! \cdot f(m)^k}, 
\end{align*} 
which then implies that the transformed ordinary and exponential 
zeta-like power series enumerating 
generalized polylogarithm functions and the 
$f$-harmonic numbers, $F_n^{(p)}(t)$, 
are expanded by the following two series variants \citep{GFTRANSHZETA2016}: 
\begin{align*} 
\sum_{n \geq 1} \frac{z^n}{f(n)^k} & = \sum_{j \geq 0} 
     \gkpSII{k}{j}_{f^{\ast}} \frac{z^j \cdot j!}{(1-z)^{j+1}} \\ 
\sum_{n \geq 1} \frac{F_n^{(r)}(1) z^n}{n!} & = 
     \sum_{j \geq 0} 
     \gkpSII{k}{j}_{f^{\ast}} \frac{z^j \cdot e^{z} (j+1+z)}{(j+1)}. 
\end{align*} 
We focus on the combinatorial relations and sums involving the 
generalized positive-order Stirling numbers in the next few sections. 

\section{Generating functions and expansions by $f$-harmonic numbers} 

\subsection{Motivation from a technique of Euler} 

We are motivated by Euler's original technique for solving the 
\emph{Basel problem} of summing the series, 
$\zeta(2) = \sum_n n^{-2}$, and later more generally all 
even-indexed integer zeta constants, $\zeta(2k)$, in closed-form by 
considering partial products of the sine function \citep[pp. 38-42]{GAMMA}. 
In particular, we observe that we have both an infinite product and a 
corresponding Taylor series expansion in $z$ for $\sin(z)$ given by 
\begin{align*} 
\sin(z) & = \sum_{n \geq 0} \frac{(-1)^n z^{2n+1}}{(2n+1)!} = 
     z \prod_{j \geq 1} \left(1 - \frac{z^2}{j^2 \pi^2}\right). 
\end{align*} 
Then if we combine the form of the coefficients of $z^3$ in the 
partial product expansions at each finite $n \in \mathbb{Z}^{+}$ with the 
known trigonometric series terms defined such that 
$[z^3] \sin(z) = -\frac{1}{3!}$ given on each respective side of the last 
equation, we see inductively that 
\begin{align*} 
H_n^{(2)} = -\pi^2 \cdot [z^2] \prod_{1 \leq j \leq n} 
     \left(1 - \frac{z^2}{j^2 \pi^2}\right) 
     \qquad\longrightarrow\qquad 
     \zeta(2) = \frac{\pi^2}{6}. 
\end{align*} 
In our case, we wish to similarly enumerate the $p$-order $f$-harmonic numbers, 
$F_n^{(p)}(t)$, through the generalized product expansions defined in 
\eqref{eqn_xft_gen_PHSymbol_def}. 

\subsection{Generating the integer order $f$-harmonic numbers} 

We first define a shorthand notation for another form of 
generalized ``\emph{$f$-factorials}'' that we will need in 
expanding the next products as follows: 
\begin{equation*} 
n!_f := \prod_{j=1}^n f(j) \qquad \text{ and } \qquad 
n!_{f(t)} := \prod_{j=1}^{n} \frac{f(j)}{t^j} = 
     \frac{n!_f}{t^{n(n+1)/2}}. 
\end{equation*} 
If we let $\zeta_p \equiv \exp(2\pi\imath / p)$ denote the 
\emph{primitive $p^{th}$ root of unity} for integers $p \geq 1$, and define the 
coefficient generating function, 
$\widetilde{f}_n(w) \equiv \widetilde{f}_n(t; w)$, by 
\begin{align*} 
\widetilde{f}_n(w) & := \sum_{k \geq 2} \gkpSI{n+1}{k}_{f(t)} w^k = 
     \left(\prod_{j=1}^{n} \left(w+f(j) t^{-j}\right) - 
     \gkpSI{n+1}{1}_{f(t)}\right) w, 
\end{align*} 
we can factor the partial products in \eqref{eqn_xft_gen_PHSymbol_def} to 
generate the $p$-order $f$-harmonic numbers in the following forms: 
\begin{align} 
\label{eqn_fkp_partialsum_fCf2_exp_forms} 
\sum_{k=1}^{n} \frac{t^{kp}}{f(k)^p} & = 
     \frac{t^{pn(n+1) / 2}}{\left(n!_{f}\right)^p} 
     [w^{2p}]\left((-1)^{p+1} \prod_{m=0}^{p-1} \sum_{k=0}^{n+1} 
     \FcfII{f(t)}{n+1}{k} \zeta_p^{m(k-1)} w^k\right) \\ 
\notag 
   & = 
     \frac{t^{pn(n+1) / 2}}{\left(n!_{f}\right)^p} 
     [w^{2p}]\left(\sum_{j=0}^{p-1} 
     \frac{(-1)^{j} w^{j}\ p}{p-j} \FcfII{f(t)}{n+1}{1}^j 
     \widetilde{f}_n(w)^{p-j}\right) \\ 
\label{eqn_fkp_partialsum_fCf2_exp_forms_v2} 
\sum_{k=1}^{n} \frac{t^{k}}{f(k)^p} & = 
     \frac{t^{n(n+1) / 2}}{\left(n!_{f}\right)^p} 
     [w^{2p}]\left((-1)^{p+1} \prod_{m=0}^{p-1} \sum_{k=0}^{n+1} 
     \FcfII{f\left(t^{1 / p}\right)}{n+1}{k} \zeta_p^{m(k-1)} w^k\right). 
\end{align} 

\begin{example}[Special Cases] 
For a fixed $f$ and any indeterminate $t \neq 0$, 
let the shorthand notation $\bar{F}_n(k) := \FcfII{f(t)}{n+1}{k}$. Then the 
following expansions illustrate several characteristic forms of these 
prescribed partial sums for the first several 
special cases of \eqref{eqn_fkp_partialsum_fCf2_exp_forms} 
when $2 \leq p \leq 5$: 
\begin{align} 
\label{eqn_pth_partial_coeff_sums_p234} 
\sum_{k=1}^{n} \frac{t^{2k}}{f(k)^2} & = 
     \frac{t^{n(n+1)}}{(n!_{f})^2}\left(\bar{F}_n(2)^2 - 2 \bar{F}_n(1) \bar{F}_n(3) 
     \right) \\ 
\notag 
\sum_{k=1}^{n} \frac{t^{3k}}{f(k)^3} & = 
     \frac{t^{3n(n+1) / 2}}{(n!_{f})^3}\left(\bar{F}_n(2)^3 - 
     3 \bar{F}_n(1) \bar{F}_n(2) \bar{F}_n(3) + 3 \bar{F}_n(1)^2 \bar{F}_n(4)\right) \\ 
\notag 
\sum_{k=1}^{n} \frac{t^{4k}}{f(k)^4} & = 
     \frac{t^{4n(n+1)}}{(n!_{f})^4}\bigl(\bar{F}_n(2)^4 - 
     4 \bar{F}_n(1) \bar{F}_n(2)^2 \bar{F}_n(3) + 2 \bar{F}_n(1)^2 \bar{F}_n(3)^2 + 
     4 \bar{F}_n(1)^2 \bar{F}_n(2) \bar{F}_n(4) \\ 
\notag 
     & \phantom{= \frac{t^{4n(n+1)}}{(n!_{f})^4}\bigl( \quad \ } 
     - 4 \bar{F}_n(1)^3 \bar{F}_n(5)\bigr) \\ 
\notag 
\sum_{k=1}^{n} \frac{t^{5k}}{f(k)^5} & = 
     \frac{t^{5n(n+1) / 2}}{(n!_{f})^5}\bigl(\bar{F}_n(2)^5 - 
     5 \bar{F}_n(1) \bar{F}_n(2)^3 \bar{F}_n(3) + 5 \bar{F}_n(1)^2 \bar{F}_n(2) \bar{F}_n(3)^2 + 
     5 \bar{F}_n(1)^2 \bar{F}_n(2)^2 \bar{F}_n(4) \\ 
\notag 
   & \phantom{\frac{t^{5n(n+1) / 2}}{(n!_{f})^5}\bigl( \ \quad} - 
     5 \bar{F}_n(1)^3 \bar{F}_n(3) \bar{F}_n(4) - 
     5 \bar{F}_n(1)^3 \bar{F}_n(2) \bar{F}_n(5) + 5 \bar{F}_n(1)^4 \bar{F}_n(6)\bigr). 
\end{align} 
For each fixed integer $p > 1$, the particular partial sums defined by the 
ordinary generating function, $\widetilde{f}_n(w)$, correspond to a function in 
$n$ that is fixed with respect to the lower indices for the triangular 
coefficients defined by \eqref{eqn_genS1ft_rec_def}. 
Moreover, the resulting coefficient expansions enumerating the 
$f$-harmonic numbers at each $p \geq 2$ are isobaric in the sense that the 
sum of the indices over the lower index $k$ is $2p$ in each individual 
term in these finite sums. 
\end{example} 

\subsection{Expansions of the generalized coefficients by 
            $f$-harmonic numbers} 

The \emph{elementary symmetric polynomials} depending on the 
function $f$ implicit to the product-based definitions of the generalized 
Stirling numbers of the first kind expanded 
through \eqref{eqn_xft_gen_PHSymbol_def} 
provide new forms of the known $p$-order harmonic number, or 
\emph{exponential Bell polynomial}, expansions of the ordinary 
Stirling numbers of the first kind enumerated in the references 
\citep{STIRESUMS,COMBIDENTS,ADVCOMB,UC}. 
Thus, if we first define the weighted sums of the $f$-harmonic numbers, 
denoted $w_f(n, m)$, recursively according to an identity for the 
Bell polynomials, $\ell \cdot Y_{n,\ell}(x_1, x_2, \ldots)$, 
for $x_k \equiv (-1)^k F_n^{(k)}(t^k) (k-1)!$ as \citep[\S 4.1.8]{UC} 
\begin{align*} 
w_f(n+1, m) & := \sum_{0 \leq k < m} (-1)^{k} F_n^{(k+1)}(t^{k+1}) 
     (1-m)_k w_f(n+1,m-1-k) + \Iverson{m = 1}, 
\end{align*} 
we can expand the generalized coefficient triangles through these 
weighted sums as 
\begin{align} 
\label{eqn_FcfII_wfnm_genHNum_weighted_sum_exps_rdef} 
\FcfII{f(t)}{n+1}{k} & = \frac{n!_{f}}{(k-1)!}\ 
     w_f(n+1, k) \\ 
\notag 
     & = 
\sum_{j=0}^{k-2} \FcfII{f(t)}{n+1}{k-1-j} 
     \frac{(-1)^{j} F_n^{(j+1)}(t^{j+1})}{(k-1)} + 
     n!_{f(t)} \cdot \Iverson{k = 1}. 
\end{align} 
This definition of the weighted $f$-harmonic sums for the generalized 
triangles in \eqref{eqn_genS1ft_rec_def} implies the special case expansions 
given in the next corollary. 

\begin{cor}[Weighted $f$-Harmonic Sums for the Generalized Stirling Numbers] 
The first few special case expansions of the coefficient identities in 
\eqref{eqn_FcfII_wfnm_genHNum_weighted_sum_exps_rdef} are stated for 
fixed $f$, $t \neq 0$, and integers $n \geq 0$ in the following forms: 
\begin{align} 
\label{eqn_fCf_GenFHHarmonic_exps} 
\FcfII{f(t)}{n+1}{2} & = \frac{n!_{f}}{t^{n(n+1) / 2}}\ F_n^{(1)}(t) \\ 
\notag 
\FcfII{f(t)}{n+1}{3} & = \frac{n!_{f}}{2\ t^{n(n+1) / 2}}\left( 
     F_n^{(1)}(t)^2 - F_n^{(2)}(t^2)\right) \\ 
\notag 
\FcfII{f(t)}{n+1}{4} & = \frac{n!_{f}}{6\ t^{n(n+1) / 2}}\left( 
     F_n^{(1)}(t)^3 - 3 F_n^{(1)}(t) F_n^{(2)}(t^2) + 2 F_n^{(3)}(t^3) 
     \right) \\ 
\notag 
\FcfII{f(t)}{n+1}{5} & = \frac{n!_{f}}{24\ t^{n(n+1) / 2}}\left( 
     F_n^{(1)}(t)^4 - 6 F_n^{(1)}(t)^2 F_n^{(2)}(t^2) + 3 F_n^{(2)}(t^2)^2 + 
     8 F_n^{(1)}(t) F_n^{(3)}(t^3) - 6 F_n^{(4)}(t^4)\right). 
\end{align} 
\end{cor}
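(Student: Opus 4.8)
The plan is to derive the four displayed identities directly from the recurrence in the second line of \eqref{eqn_FcfII_wfnm_genHNum_weighted_sum_exps_rdef}, viewing it as a Newton--Girard relation among symmetric functions and unwinding it from an explicit base case. First I would record the structural fact that makes the recurrence transparent. Since $\FcfII{f(t)}{n+1}{k} = [x^{k-1}]\prod_{j=1}^{n}\left(x + f(j) t^{-j}\right)$, I factor $f(j) t^{-j}$ out of each term to write the product as $n!_{f(t)}\prod_{j=1}^{n}\left(1 + (t^j/f(j)) x\right)$, so that $\FcfII{f(t)}{n+1}{k} = n!_{f(t)}\, e_{k-1}\!\left(t/f(1), \ldots, t^n/f(n)\right)$, the elementary symmetric polynomial of degree $k-1$ in the arguments $t^j/f(j)$ scaled by $n!_{f(t)} = n!_f / t^{n(n+1)/2}$. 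The key observation is that the $p$-th power sum of these same arguments is $\sum_{j=1}^{n}(t^j/f(j))^p = F_n^{(p)}(t^p)$, so that the recurrence of \eqref{eqn_FcfII_wfnm_genHNum_weighted_sum_exps_rdef} is precisely the classical expression of $e_{k-1}$ through the lower elementary symmetric polynomials and the power sums $F_n^{(j+1)}(t^{j+1})$; indeed, writing $m = k-1$ it reads $m\, e_m = \sum_{i=1}^{m}(-1)^{i-1} p_i\, e_{m-i}$.

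Second, I would fix the base case $\FcfII{f(t)}{n+1}{1} = n!_{f(t)} = n!_f / t^{n(n+1)/2}$, which follows either from the $\Iverson{k=1}$ term in \eqref{eqn_FcfII_wfnm_genHNum_weighted_sum_exps_rdef} or directly as the constant coefficient $\prod_{j=1}^n f(j) t^{-j}$ of the product. With this in hand, the four claimed formulas are obtained by applying the recurrence successively for $k = 2, 3, 4, 5$, substituting the previously computed expressions at each step and collecting terms. Writing $C := n!_f / t^{n(n+1)/2}$ and abbreviating $F_p := F_n^{(p)}(t^p)$, the $k=2$ case gives $\FcfII{f(t)}{n+1}{2} = C F_1$ immediately; the $k=3$ case then produces $\frac{C}{2}(F_1^2 - F_2)$; feeding these into the $k=4$ and $k=5$ instances yields the cubic and quartic symmetric-function expressions after combining the fractional coefficients over a common denominator.

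The computation is entirely mechanical, so the main thing to guard against is bookkeeping error rather than any conceptual obstacle: one must track the alternating sign $(-1)^j$, the normalization $1/(k-1)$, and --- most easily mistaken --- the argument of each harmonic number, which is $F_n^{(j+1)}(t^{j+1})$ and not $F_n^{(j+1)}(t)$. I expect the verification that the rational coefficients collect correctly into the integer coefficient patterns $1, -3, 2$ for $k=4$ and $1, -6, 3, 8, -6$ for $k=5$ to be the only step requiring care; this amounts to checking that the recurrence reproduces the classical Newton--Girard coefficients, which serves as a useful internal consistency check on the derivation.
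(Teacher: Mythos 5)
Your proposal is correct and follows essentially the same route as the paper, which simply iterates the recurrence in \eqref{eqn_FcfII_wfnm_genHNum_weighted_sum_exps_rdef} from the base case $\FcfII{f(t)}{n+1}{1} = n!_{f(t)}$ for $k = 2,3,4,5$; your additional observation that the recurrence is the Newton--Girard identity for the elementary symmetric polynomials $e_{k-1}(t^j/f(j))$ with power sums $F_n^{(p)}(t^p)$ is a sound justification of that recurrence rather than a different method. The explicit coefficient bookkeeping you describe does reproduce the stated patterns $1,-3,2$ and $1,-6,3,8,-6$.
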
 
\begin{proof} 
These expansions are computed explicitly using the recursive formula in 
\eqref{eqn_FcfII_wfnm_genHNum_weighted_sum_exps_rdef} 
for the first few cases of the lower triangle index $2 \leq k \leq 5$. 
\end{proof} 
We will return to the expansions of these coefficients 
in \eqref{eqn_FcfII_wfnm_genHNum_weighted_sum_exps_rdef} to formulate new 
finite sum identities providing functional relations between the 
$p$-order $f$-harmonic number sequences in the next section. 

\subsection{Combinatorial sums and functional equations for the 
            $f$-harmonic numbers} 

The next several properties give interesting expansions of 
the $p$-order $f$-harmonic numbers recursively over the parameter $p$ 
that can then be employed 
to remove, or at least significantly obfuscate, the 
current direct cancellation problem with these forms phrased by the 
examples in \eqref{eqn_pth_partial_coeff_sums_p234} and 
in \eqref{eqn_fCf_GenFHHarmonic_exps}. 

\begin{prop} 
For any fixed $p \geq 1$ and $n \geq 0$, we have the following 
coefficient product identities generating the 
$p$-order $f$-harmonic numbers, $F_n^{(p)}(t)$: 
\begin{align} 
\label{eqn_Fnpt_pvar_rform_exps} 
F_n^{(p+1)}(t) & = F_n^{(p)}(t) + 
     \frac{(-1)^{p} t^{n(n+1) / 2}}{t^{\frac{pn(n+1)}{2(p+1)}} n!_{f}} 
     \FcfII{f(t^{1 / (p+1)})}{n+1}{p+2} \\ 
\notag 
   & \phantom{= F_n^{(p)}(t)\ } + 
     \sum_{j=0}^{p-1} 
     \frac{p\ (-1)^{j+1} t^{n(n+1) / 2}}{t^{\frac{jn(n+1)}{2p}} 
     (n!_{f})^{p-j} (p-j)} \left( 
     \sum_{\substack{0 \leq i_1, \ldots, i_{p-j} \leq j \\ 
           i_1 + \cdots + i_{p-j} = j}} 
     \FcfII{f(t^{1 / p})}{n+1}{i_1 + 2} \cdots 
     \FcfII{f(t^{1 / p})}{n+1}{i_{p-j} + 2}\right) \\ 
\notag 
   & \phantom{= F_n^{(p)}(t)\ } + 
     \sum_{j=0}^{p-1} \sum_{i=0}^{j} 
     \frac{(p+1) t^{n(n+1) / 2} (-1)^{j}}{ 
     t^{\frac{jn(n+1)}{2(p+1)}} (n!_{f})^{p+1-j} (p+1-j)} 
     \FcfII{f(t^{1 / (p+1)})}{n+1}{i+2} \times \\ 
\notag 
   & \phantom{= F_n^{(p)}(t) + \sum\sum\ } \times 
     \left( 
     \sum_{\substack{0 \leq i_1, \ldots, i_{p-j} \leq j-i \\ 
           i_1 + \cdots + i_{p-j} = j-i}} 
     \prod_{m=1}^{p-j} \FcfII{f(t^{1 / (p+1)})}{n+1}{i_m + 2}\right). 
\end{align} 
\end{prop}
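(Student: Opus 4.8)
The plan is to read off both $F_n^{(p)}(t)$ and $F_n^{(p+1)}(t)$ from the \emph{second} closed form in \eqref{eqn_fkp_partialsum_fCf2_exp_forms}, the one phrased through the ordinary generating function $\widetilde{f}_n(w)$, and then to match the resulting coefficient extractions term-by-term against the three blocks on the right-hand side of \eqref{eqn_Fnpt_pvar_rform_exps}. The first observation is that replacing $t$ by $t^{1/p}$ in \eqref{eqn_fkp_partialsum_fCf2_exp_forms} turns the $t^{kp}$-weighted left-hand side into $\sum_{k=1}^{n} t^{k}/f(k)^p = F_n^{(p)}(t)$, exactly as recorded in \eqref{eqn_fkp_partialsum_fCf2_exp_forms_v2}; doing the same with $p+1$ in place of $p$ produces $F_n^{(p+1)}(t)$. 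So I would begin by writing $F_n^{(p)}(t)$ as $\frac{t^{n(n+1)/2}}{(n!_{f})^{p}}[w^{2p}]$ applied to $\sum_{j=0}^{p-1}\frac{(-1)^{j}p}{p-j}\FcfII{f(t^{1/p})}{n+1}{1}^{j}\,w^{j}\,\widetilde{f}_n(t^{1/p};w)^{p-j}$, together with the analogous expression for $F_n^{(p+1)}(t)$ using $p+1$, the substitution $t^{1/(p+1)}$, and the extractor $[w^{2(p+1)}]$.

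The second step converts each coefficient extraction into the constrained convolution sums appearing in \eqref{eqn_Fnpt_pvar_rform_exps}. Writing $\widetilde{f}_n(s;w)=w^{2}\sum_{i\geq 0}\FcfII{f(s)}{n+1}{i+2}w^{i}$ and raising to the power $p-j$ (respectively $p+1-j$), one finds that $[w^{2p-j}]\widetilde{f}_n(t^{1/p};w)^{p-j}$ equals $\sum_{i_1+\cdots+i_{p-j}=j}\prod_{\ell}\FcfII{f(t^{1/p})}{n+1}{i_\ell+2}$, and likewise $[w^{2(p+1)-j}]\widetilde{f}_n(t^{1/(p+1)};w)^{p+1-j}$ equals the $(p+1-j)$-fold convolution constrained to total index $j$. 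I would then substitute $\FcfII{f(s)}{n+1}{1}=n!_{f}\,s^{-n(n+1)/2}$ with $s=t^{1/p}$ or $s=t^{1/(p+1)}$, which manufactures the powers $(n!_{f})^{-(p-j)}$, the fractional $t$-powers $t^{-jn(n+1)/(2p)}$, and their $(p+1)$-analogs that sit in the denominators of \eqref{eqn_Fnpt_pvar_rform_exps}.

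The assembly is then pure bookkeeping. The summands $0\leq j\leq p-1$ of the $F_n^{(p+1)}(t)$ expansion should reproduce the double sum in the third block once I split off a single summation index $i_0=i$ from the $(p+1-j)$-fold convolution, identifying the isolated factor $\FcfII{f(t^{1/(p+1)})}{n+1}{i+2}$; the extreme term $j=p$ collapses to the single coefficient $\FcfII{f(t^{1/(p+1)})}{n+1}{p+2}$ and supplies the first explicit term. Separately, the entire $F_n^{(p)}(t)$ expansion reappears in the middle block: comparing the sign $(-1)^{j}$ coming from \eqref{eqn_fkp_partialsum_fCf2_exp_forms} against the $(-1)^{j+1}$ written in that block shows the block equals $-F_n^{(p)}(t)$, so that adding the explicit $F_n^{(p)}(t)$ term telescopes it away and isolates the desired recurrence. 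Throughout I would cross-check the small cases $p=1,2$ against the corollary expansions in \eqref{eqn_fCf_GenFHHarmonic_exps}.

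The main obstacle I expect is precisely this index and constant bookkeeping: confirming that each power of $w$ matches the prescribed total index $j$ in the convolution, keeping the weights $p/(p-j)$ and $(p+1)/(p+1-j)$ and their signs aligned with the correct blocks, and above all pinning down the multiplicity of the isolated top term at $j=p$, where the convolution degenerates to one factor and the weight $1/(p+1-j)$ must be evaluated in the boundary case $j=p$. This is the one place where a stray factor of $p+1$ is easy to drop, so I would isolate the $j=p$ contribution as a separate lemma and verify its coefficient directly against the $p=1$ instance, in which the identity collapses to $F_n^{(2)}(t)$ expressed through $\FcfII{f(t^{1/2})}{n+1}{2}$ and $\FcfII{f(t^{1/2})}{n+1}{3}$ via \eqref{eqn_fCf_GenFHHarmonic_exps}.
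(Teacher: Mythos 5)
Your proposal follows essentially the same route as the paper's own proof: rewrite $F_n^{(p)}(t)$ and $F_n^{(p+1)}(t)$ via the $\widetilde{f}_n(w)$-form of \eqref{eqn_fkp_partialsum_fCf2_exp_forms} under the substitutions $t\mapsto t^{1/p}$ and $t\mapsto t^{1/(p+1)}$, expand $[w^{j}](\widetilde{f}_n(w)/w^2)^{p-j}$ as the constrained convolution of the coefficients $\FcfII{f(\cdot)}{n+1}{i+2}$, substitute the closed form of $\FcfII{f(\cdot)}{n+1}{1}$, and take the difference over successive $p$ so that the middle block cancels the explicit $F_n^{(p)}(t)$ term. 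Your flagged worry about the boundary weight at $j=p$ is well placed (the paper's own intermediate display carries a factor $(p+1)$ on that term that does not appear in the stated identity), but the overall decomposition and bookkeeping are the same as in the paper.
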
 
\begin{proof} 
To begin with, observe the following rephrasing of the partial sums 
expansions from equations \eqref{eqn_fkp_partialsum_fCf2_exp_forms} and 
\eqref{eqn_fkp_partialsum_fCf2_exp_forms_v2} as 
\begin{align*} 
F_n^{(p+1)}(t) & = \frac{t^{n(n+1) / 2}}{(n!_{f})^{p+1}} \sum_{j=0}^{p} 
     \frac{(p+1)\ (-1)^j}{(p+1-j)} 
     \FcfII{f\left(t^{1 / (p+1)}\right)}{n+1}{1}^j 
     [w^{2p+2-j}] \widetilde{f}_n(w)^{p+1-j} \\ 
   & = 
     \frac{(p+1) (-1)^{p} t^{n(n+1) / 2}}{t^{\frac{pn(n+1)}{2(p+1)}} n!_{f}} 
     \FcfII{f(t^{1 / (p+1)})}{n+1}{p+2} \\ 
     & \phantom{= \quad \ } + 
     \sum_{j=0}^{p-1} 
     \frac{(p+1) (-1)^{j} t^{n(n+1) / 2}}{t^{\frac{jn(n+1)}{2(p+1)}} 
     (n!_{f})^{p+1-j} (p+1-j)} [w^{j}] \left( 
     \frac{\widetilde{f}_n(w)}{w^2}\right)^{p+1-j}. 
\end{align*} 
The coefficients involved in the partial sum forms for each sequence of 
$F_n^{(p)}(t)$ are implicitly tied to the form of 
$t \mapsto t^{1 / p}$ in the triangle definition of \eqref{eqn_genS1ft_rec_def}. 
Given this distinction, let the generating function $\widetilde{f}$ be 
defined equivalently in the more careful definition as 
$\widetilde{f}_n(w) :\equiv \widetilde{f}_n(t;\ w)$. 
The powers of the generating function $\widetilde{f}_n(w)$ from the 
previous equations satisfy the coefficient term expansions 
according to the next equation \citep[\cf \Section 7.5]{GKP}. 
\begin{align*} 
[w^{2p-j}] \widetilde{f}_n(w)^{p-j} & := 
     [w^{2p-j}] \widetilde{f}_n(t;\ w)^{p-j} = 
     [w^{j}] \left(\frac{\widetilde{f}_n(t;\ w)}{w^2}\right)^{p-j} \\ 
     & \phantom{:} = 
     \sum_{\substack{0 \leq i_1, \ldots, i_{p-j} \leq j \\ 
           i_1 + \cdots i_{p-j} = j}} 
     \FcfII{f(t)}{n+1}{i_1 + 2} \cdots \FcfII{f(t)}{n+1}{i_{p-j} + 2} 
\end{align*} 
Then by taking the 
difference of the harmonic sequence terms over successive indices 
$p \geq 1$ and at a fixed index of $n \geq 1$, the stated recurrences for 
these $p$-order sequences result. 
\end{proof} 

The generating function series over $n$ in the next proposition 
is related to the forms of the \emph{Euler sums} 
considered in \citep{STIRESUMS} and to the 
context of the generalized zeta function transformations considered in 
\citep{GFTRANSHZETA2016} briefly noted in the introduction. 
We suggest the infinite sums over these generalized 
identities for $n \geq 1$ as a 
topic for future research exploration in the concluding remarks of 
Section \ref{Section_Concl}. 

\begin{prop}[Functional Equations for the $f$-Harmonic Numbers]
\label{prop_FHNum_fnal_eqn_and_coeff_exps}
For any integers $n \geq 0$ and $p \geq 2$, we have the following 
functional relations between the $p$-order and $(p-1)$-order 
$f$-harmonic numbers over $n$ and $p$: 
\begin{align*} 
F_{n+1}^{(p)}(t^p) & = F_n^{(p)}(t^p) + 
     \sum_{1 \leq j < p} \gkpSI{n+2}{p+1-j}_{f(t)} 
     \frac{(-1)^{p+1-j} t^{j(n+1)}}{f(n+1)^{j} (n+1)!_{f(t)}} + 
     \gkpSI{n+1}{p}_{f(t)} \frac{(-1)^{p+1}}{(n+1)!_{f(t)}} \\ 
     & = F_n^{(p)}(t^p) + \frac{t^{(p-1)(n+1)}}{f(n+1)^{p-1}} + 
     \frac{(-1)^{p-1}}{(n+1)!_{f(t)}} \left( 
     \gkpSI{n+1}{p}_{f(t)} + \gkpSI{n+1}{p-1}_{f(t)} \right) \\ 
     & \phantom{= F_n^{(p)}(t^p) \ } + 
     \gkpSI{n+2}{p}_{f(t)} \frac{(-1)^{p} t^{n+1}}{f(n+1) (n+1)!_{f(t)}} \\ 
     & \phantom{= F_n^{(p)}(t^p) \ } + 
     \sum_{j=0}^{p-3} \gkpSI{n+2}{j+2}_{f(t)} 
     \frac{(-1)^{j+1} \left(f(n+1)t^{-(n+1)} - 1\right) t^{(p-1-j)(n+1)}}{ 
     f(n+1)^{p-1-j} (n+1)!_{f(t)}}. 
\end{align*} 
\end{prop}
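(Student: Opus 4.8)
The plan is to start from the coefficient-extraction representation of the $p$-order $f$-harmonic numbers furnished by \eqref{eqn_fkp_partialsum_fCf2_exp_forms}, applied at the shifted lower index $n+1$. Writing the ordinary generating polynomials $Q_r(w) := \sum_{k \geq 1}\gkpSI{r}{k}_{f(t)}w^k$, the recurrence \eqref{eqn_genS1ft_rec_def} is equivalent to the single factorization $Q_{n+2}(w) = \left(w + f(n+1)t^{-(n+1)}\right)Q_{n+1}(w)$, since the Iverson term vanishes for $n+2 \geq 1$. Because the $m$-th factor inside the product in \eqref{eqn_fkp_partialsum_fCf2_exp_forms} is precisely $\zeta_p^{-m}Q_{n+1}(\zeta_p^m w)$, this recurrence shows that passing from row $n+1$ to row $n+2$ multiplies the $m$-th factor by $\left(\zeta_p^m w + f(n+1)t^{-(n+1)}\right)$. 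As a guiding sanity check, note that both displayed forms assert $F_{n+1}^{(p)}(t^p) - F_n^{(p)}(t^p)$ equals the single increment $t^{p(n+1)}/f(n+1)^p$, so the content is to re-express that increment structurally through the generalized first-kind numbers.

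Next I would invoke the cyclotomic identity $\prod_{m=0}^{p-1}\left(\zeta_p^m w + b\right) = b^p + (-1)^{p+1}w^p$, with $b = f(n+1)t^{-(n+1)}$, to express the whole $(n+2)$-row product in \eqref{eqn_fkp_partialsum_fCf2_exp_forms} as $\left(b^p + (-1)^{p+1}w^p\right)$ times the corresponding $(n+1)$-row product. Extracting $[w^{2p}]$ and distributing across the two summands splits $F_{n+1}^{(p)}(t^p)$ into two pieces. Matching the prefactors $t^{pr(r+1)/2}/(r!_f)^p$ between the rows $r=n,n+1$ and using $(n!_{f(t)}/(n+1)!_{f(t)})^p = f(n+1)^{-p}t^{p(n+1)}$, the $b^p$ piece collapses to exactly $F_n^{(p)}(t^p)$, while the $(-1)^{p+1}w^p$ piece reduces the coefficient extraction from $[w^{2p}]$ to $[w^{p}]$ of the $(n+1)$-row product.

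It then remains to re-express these reduced coefficient extractions back in terms of the generalized first-kind numbers. For this I would reuse the composition-sum expansion of the powers of $\widetilde{f}_n(w)$ recorded in the proof of the preceding proposition (see \eqref{eqn_Fnpt_pvar_rform_exps}) — namely the expansion of $[w^{2p-j}]\widetilde{f}_n(w)^{p-j}$ as a constrained sum $\sum_{i_1 + \cdots + i_{p-j} = j}\prod_{\ell} \gkpSI{n+1}{i_\ell+2}_{f(t)}$ — and then convert between rows by $\gkpSI{n+2}{k}_{f(t)} = f(n+1)t^{-(n+1)}\gkpSI{n+1}{k}_{f(t)} + \gkpSI{n+1}{k-1}_{f(t)}$. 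Collecting the surviving terms by their weight in $t$, in $f(n+1)$, and in $(n+1)!_{f(t)}$ yields the first stated finite sum, the isolated boundary contribution $\gkpSI{n+1}{p}_{f(t)}(-1)^{p+1}/(n+1)!_{f(t)}$ coming from the lowest-order $[w^{p}]$ term. The second, equivalent form is then obtained purely algebraically: split off the single leading $(p-1)$-order increment $t^{(p-1)(n+1)}/f(n+1)^{p-1}$, apply the recurrence once more to pair $\gkpSI{n+1}{p}_{f(t)}$ with $\gkpSI{n+1}{p-1}_{f(t)}$, and re-index the tail as the displayed sum over $0 \leq j \leq p-3$.

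The clean inputs here are the generating-polynomial recurrence and the cyclotomic factorization; the main obstacle is the bookkeeping in this last step. One must track the interacting powers of $t$ and $f(n+1)$, the factorial weights $(n+1)!_f$ and $(n+1)!_{f(t)}$, and the signs arising both from the explicit $(-1)^{\cdot}$ factors and from the collapse $\prod_m \zeta_p^{-m} = (-1)^{p+1}$, while simultaneously converting the \emph{symmetric} coefficient extractions into the \emph{asymmetric}, row-shifted Stirling sums with their composition constraints $i_1 + \cdots + i_{p-j} = j$. In particular, the delicate point is verifying term-by-term that the two displayed forms agree once the row-shift recurrence is applied, and confirming that the ostensibly order-$p$ first form silently carries the order-$(p-1)$ term that is made explicit in the second.
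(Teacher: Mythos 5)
Your setup is internally correct but it does not reach the stated identity, and the step where the actual content lives is not carried out. The factorization $Q_{n+2}(w) = \bigl(w + f(n+1)t^{-(n+1)}\bigr)Q_{n+1}(w)$ and the cyclotomic identity $\prod_{m=0}^{p-1}\bigl(\zeta_p^m w + b\bigr) = b^p + (-1)^{p+1}w^p$ are both right, but if you apply the same cyclotomic collapse to \emph{every} factor $\bigl(\zeta_p^m w + f(j)t^{-j}\bigr)$ of the row-$(n+1)$ product (not just the new one), the whole object $\prod_m \zeta_p^{-m}Q_{n+1}(\zeta_p^m w)$ collapses to $w^p\prod_{j=1}^{n}\bigl(f(j)^pt^{-jp} + (-1)^{p+1}w^p\bigr)$, so your residual extraction $[w^p]$ is just the single monomial $(n!_{f(t)})^p$ and the entire computation returns only the trivial telescoping increment $t^{p(n+1)}/f(n+1)^p$ — which you yourself flag as the ``sanity check.'' The proposition's content is precisely the nontrivial re-expression of that increment as a \emph{linear} combination of single entries $\gkpSI{n+2}{p+1-j}_{f(t)}$ and $\gkpSI{n+1}{p}_{f(t)}$, and your proposed tool for producing it — the composition-sum expansion $[w^{2p-j}]\widetilde{f}_n(w)^{p-j} = \sum_{i_1+\cdots+i_{p-j}=j}\prod_{\ell}\gkpSI{n+1}{i_\ell+2}_{f(t)}$ from the preceding proposition — yields degree-$(p-j)$ \emph{products} of Stirling entries. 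The row-shift recurrence $\gkpSI{n+2}{k}_{f(t)} = f(n+1)t^{-(n+1)}\gkpSI{n+1}{k}_{f(t)} + \gkpSI{n+1}{k-1}_{f(t)}$ relates single entries across rows; it gives no mechanism for linearizing those products, so ``collecting the surviving terms'' as described cannot produce the first displayed form.

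The missing ingredient is a \emph{linear} relation between $F_n^{(p)}(t^p)$ and the Stirling entries. The paper obtains it by inverting the weighted-sum identity \eqref{eqn_FcfII_wfnm_genHNum_weighted_sum_exps_rdef} (a Newton's-identity-type recurrence between the elementary symmetric functions $\gkpSI{n+1}{k}_{f(t)}$ and the power sums $F_n^{(j)}(t^j)$), namely
\begin{equation*}
F_n^{(p)}(t^p) = \sum_{1 \leq j < p} \gkpSI{n+1}{p+1-j}_{f(t)}
     \frac{(-1)^{p+1-j} F_n^{(j)}(t^j)}{n!_{f(t)}} +
     \gkpSI{n+1}{p+1}_{f(t)} \frac{p (-1)^{p+1}}{n!_{f(t)}},
\end{equation*}
and then differencing over $n$ with two applications of \eqref{eqn_genS1ft_rec_def}; your proposal never invokes \eqref{eqn_FcfII_wfnm_genHNum_weighted_sum_exps_rdef} or any substitute for it. If you want to stay within your generating-polynomial framework, the substitute is available: writing $c = f(n+1)t^{-(n+1)}$, the first displayed form is equivalent to $\sum_{k=p+1}^{n+2}\gkpSI{n+2}{k}_{f(t)}(-c)^k = \gkpSI{n+1}{p}_{f(t)}(-c)^{p+1}$, which follows by a one-line telescoping argument from the recurrence together with the vanishing $Q_{n+2}(-c)=0$ forced by your own factorization. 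Either of these linear inputs must be added before the argument closes; as written, the proposal stops exactly where the proof begins.
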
 
\begin{proof} 
First, notice that \eqref{eqn_FcfII_wfnm_genHNum_weighted_sum_exps_rdef} 
implies that we have the following weighted harmonic number sums for the 
$p$-order $f$-harmonic numbers: 
\begin{align*}
F_n^{(p)}(t^p) & = \sum_{1 \leq j < p} \gkpSI{n+1}{p+1-j}_{f(t)} 
     \frac{(-1)^{p+1-j} F_n^{(j)}(t^j)}{n!_{f(t)}} + 
     \gkpSI{n+1}{p+1}_{f(t)} \frac{p (-1)^{p+1}}{n!_{f(t)}}. 
\end{align*} 
Next, we use \eqref{eqn_genS1ft_rec_def} twice to expand the 
differences of the left-hand-side of the previous equation as 
\begin{align*} 
\frac{t^{p(n+1)}}{f(n+1)^p} & = F_{n+1}^{(p)}(t^p) - F_n^{(p)}(t^p) \\ 
     & = 
     \sum_{1 \leq j < p} \gkpSI{n+2}{p+1-j}_{f(t)} 
     \frac{(-1)^{p+1-j} F_{n+1}^{(j)}(t^j)}{(n+1)!_{f(t)}} - 
     \sum_{1 \leq j < p} \gkpSI{n+1}{p+1-j}_{f(t)} 
     \frac{(-1)^{p+1-j} F_{n}^{(j)}(t^j)}{n!_{f(t)}} \\ 
     & \phantom{= \sum \quad \ } + 
     \gkpSI{n+2}{p+1}_{f(t)} \frac{p (-1)^{p+1}}{(n+1)!_{f(t)}} - 
     \frac{f(n+1)}{t^{n+1}} \gkpSI{n+1}{p+1}_{f(t)} 
     \frac{p (-1)^{p+1}}{(n+1)!_{f(t)}} \\ 
     & = 
     \sum_{1 \leq j < p} \gkpSI{n+2}{p+1-j}_{f(t)} 
     \frac{(-1)^{p+1-j} t^{j(n+1)}}{f(n+1)^{j} (n+1)!_{f(t)}} - 
     \sum_{1 \leq j < p} \gkpSI{n+1}{p-j}_{f(t)} 
     \frac{(-1)^{p-j} F_n^{(j)}(t^j)}{(n+1)!_{f(t)}} \\ 
     & \phantom{= \sum \quad \ } + 
     \gkpSI{n+1}{p}_{f(t)} \frac{p (-1)^{p+1}}{(n+1)!_{f(t)}} \\ 
     & = 
     \sum_{1 \leq j < p} \gkpSI{n+2}{p+1-j}_{f(t)} 
     \frac{(-1)^{p+1-j} t^{j(n+1)}}{f(n+1)^{j} (n+1)!_{f(t)}} - 
     \gkpSI{n+1}{p}_{f(t)} \frac{(p-1) (-1)^{p+1}}{(n+1)!_{f(t)}} \\ 
     & \phantom{= \sum \quad \ } + 
     \gkpSI{n+1}{p}_{f(t)} \frac{p (-1)^{p+1}}{(n+1)!_{f(t)}}. 
\end{align*} 
The second identity is verified similarly by combining the coefficient 
terms as in the last equations and adding the right-hand-side 
differences of the $(p-1)$-order $f$-harmonic numbers to the first identity. 
\end{proof} 

One immediate corollary that must by its importance be expanded in turn 
explicitly in the next example 
provides new expansions of the $p$-order harmonic numbers in terms of the 
ordinary triangle of Stirling numbers of the first kind corresponding to the 
case where $(f(n), t) \equiv (n, 1)$ in the previous proposition. 
Similar expansions of identities related to the generalized 
generating function transformations in \citep{GFTRANSHZETA2016} result for the 
special cases of the proposition where 
$(f(n), t) \equiv (\alpha n+\beta, t)$ for some application-dependent 
prescribed $\alpha, \beta \in \mathbb{C}$ 
defined such that $-\frac{\beta}{\alpha} \notin \mathbb{Z}$. 
Another special case worth noting and independently expanding 
provides analogous relations between the 
$q$-binomial coefficients implicit to the forms of the 
\emph{$q$-binomial theorem} expanding the $q$-Pochhammer symbols, 
$(a; q)_n$, for each $n \geq 0$ \citep[\cf \S 17.2]{NISTHB}. 

\begin{example}[Stirling Numbers and Euler Sums] 
\label{example_SpCase_S1HNum_FnalEqn_Ident}
For all integers $p \geq 3$ and fixed $n \in \mathbb{Z}^{+}$, 
we have the following identity relating the successive differences of the 
$p$-order harmonic numbers and the Stirling numbers of the first kind: 
\begin{align} 
\label{eqn_S1HNum_fnaleqn_exp_v1} 
\frac{1}{n^p} & = \frac{1}{n^{p-1}} + \frac{(-1)^{p-1}}{n!}\left( 
     \gkpSI{n}{p} + \gkpSI{n}{p-1}\right) + 
     \gkpSI{n+1}{p} \frac{(-1)^p}{n \cdot n!} \\ 
\notag 
     & \phantom{=\frac{1}{n^{p-1}}\ } + 
     \sum_{j=0}^{p-3} \gkpSI{n+1}{j+2} \frac{(-1)^{j+1} (n-1)}{ 
     n^{p-1-j} \cdot n!}. 
\end{align} 
The relation in \eqref{eqn_S1HNum_fnaleqn_exp_v1} certainly implies new 
finite sum identities between the $p$-order harmonic numbers and the 
Stirling numbers of the first kind, though the generating functions and 
limiting cases of these sums provide more information on infinite sums 
considered in several of the references. 

With this in mind, we define the \emph{Nielsen generalized polylogarithm}, 
$S_{t,k}(z)$, by the infinite generating series over the $t$-power-scaled 
Stirling numbers as \citep[\cf \S 5]{STIRESUMS} 
\begin{align*} 
S_{t,k}(z) & := \sum_{n \geq 1} \gkpSI{n}{k} \frac{z^n}{n^t \cdot n!}. 
\end{align*} 
We see immediately that \eqref{eqn_S1HNum_fnaleqn_exp_v1} provides strictly 
enumerative relations between the polylogarithm function 
generating functions, $\operatorname{Li}_p(z) / (1-z)$, for the 
$p$-order harmonic numbers and the Nielsen polylogarithms. 
Perhaps more interestingly, we also find new identities between the 
Riemann zeta functions, 
$\zeta(p)$ and $\zeta(p-1)$, and the special classes of \emph{Euler sums} 
given by $S_{t,k}(1)$ for $t \in [2, p-1]$ and $k \in [2, p]$ 
defined as in the reference \citep[\S 5]{STIRESUMS}. 
\end{example} 

\section{Coefficient identities and 
         generalized forms of the Stirling convolution polynomials} 

\subsection{Generalized Coefficient Identities and Relations} 

There are several immediate for small-indexed columns of the triangle 
defined by \eqref{eqn_genS1ft_rec_def} and 
that can both be given immediately and that follow from an 
inductive argument. The 
next identities in \eqref{eqn_gen_FcfII_ftnk_gen_k_idents} are given 
for general lower column index $k \geq 1$ by  
\begin{align} 
\label{eqn_gen_FcfII_ftnk_gen_k_idents} 
\FcfII{f(t)}{n}{k} & = 
     [w^{k-1}]\left(\prod_{j=1}^{n-1} (w + f(j)\ t^{-j})\right) 
     \Iverson{n \geq 1} + \Iverson{n = k = 0} \\ 
\notag 
     & = 
     \sum_{\substack{ 
           0 < i_1 < \cdots < i_{n-k} < n}} 
     f(i_1) \cdots f(i_{n-k}) \cdot t^{-(i_1 + \cdots + i_{n-k})}, 
\end{align} 
which follows immediately by considering the first products of the form 
$\prod_i (z + x_i)$ in the context of elementary symmetric polynomials for 
these specific $x_i$. 

\begin{prop}[Horizontal and Vertical Column Recurrences] 
The generalized Stirling numbers of the first kind over the 
first several special case columns for the 
shifted upper index of $n+1$ in the expansions of 
\eqref{eqn_genS1ft_rec_def} are given by the 
next recurrence relations for all $n \geq 0$ and any $k \geq 2$. 
\begin{align} 
\label{eqn_FcfII_ftnk_spcase_cols_and_rdefs} 
\FcfII{f(t)}{n+1}{1} & = \frac{n!_{f}}{t^{n(n+1) / 2}} \\ 
\notag 
\FcfII{f(t)}{n+1}{k} & = \frac{n!_{f}}{t^{n(n+1) / 2}} \sum_{j=1}^{n} 
     \FcfII{f(t)}{j}{k-1} \frac{t^{j(j+1) / 2}}{j!_{f}},\ 
     \text{ if $k \geq 2$} 
\end{align} 
\end{prop}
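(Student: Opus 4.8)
The plan is to prove the two displayed identities separately. The first is essentially a reading-off of a single coefficient, while the second is the solution of the defining recurrence \eqref{eqn_genS1ft_rec_def} regarded as a first-order linear recurrence in the (shifted) upper index, solved by the standard summing-factor technique.

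For $\FcfII{f(t)}{n+1}{1}$ I would extract the constant term in $w$ from the product expansion in \eqref{eqn_gen_FcfII_ftnk_gen_k_idents}. Since $\FcfII{f(t)}{n+1}{1} = [w^{0}]\prod_{j=1}^{n}\bigl(w + f(j)\,t^{-j}\bigr)$, only the term carrying no factor of $w$ survives, and it equals $\prod_{j=1}^{n} f(j)\,t^{-j} = n!_{f}/t^{n(n+1)/2}$. Equivalently one can induct on $n$ directly from \eqref{eqn_genS1ft_rec_def}: here the neighboring term $\FcfII{f(t)}{n}{0}$ vanishes (it is the $[w^{-1}]$ coefficient of a polynomial) and the bracket $\Iverson{n=k=0}$ is zero, so the recurrence collapses to $\FcfII{f(t)}{n+1}{1} = f(n)\,t^{-n}\,\FcfII{f(t)}{n}{1}$ with base value $\FcfII{f(t)}{1}{1} = 1$, which telescopes to the claimed product.

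For the column recurrence with $k \geq 2$ fixed, set $a_{m} := \FcfII{f(t)}{m+1}{k}$ and $b_{m} := \FcfII{f(t)}{m}{k-1}$. Specializing \eqref{eqn_genS1ft_rec_def} to the upper index $m+1$, and noting that the Iverson term is identically zero because $k \neq 0$, gives for all $m \geq 1$ the first-order linear recurrence $a_{m} = f(m)\,t^{-m}\,a_{m-1} + b_{m}$, subject to the initial value $a_{0} = \FcfII{f(t)}{1}{k} = [w^{k-1}]\,1 = 0$. I would then introduce the summing factor $P_{m} := \prod_{i=1}^{m} f(i)\,t^{-i} = m!_{f}/t^{m(m+1)/2}$, which is precisely $\FcfII{f(t)}{m+1}{1}$ by the first identity and satisfies $P_{m} = f(m)\,t^{-m}\,P_{m-1}$. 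Dividing the recurrence by $P_{m}$ turns the left-hand side into a telescoping difference, so summing over $1 \leq m \leq n$ and using $a_{0}=0$ yields
\[
\frac{a_{n}}{P_{n}} = \sum_{j=1}^{n} \frac{b_{j}}{P_{j}}.
\]
Since $1/P_{j} = t^{j(j+1)/2}/j!_{f}$, multiplying back through by $P_{n} = n!_{f}/t^{n(n+1)/2}$ reproduces exactly the stated sum.

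The main obstacle is boundary bookkeeping rather than any substantive computation: I must verify that $\Iverson{n=k=0}$ contributes nothing across the whole summation range (true once $k \geq 2$), that $a_{0} = \FcfII{f(t)}{1}{k}$ genuinely vanishes for $k \geq 2$ so the homogeneous part of the solution drops out, and that the index shift between the ``$n+1$'' of the statement and the running variables $m, j$ stays consistent. As an independent check I would also record the purely combinatorial route: via \eqref{eqn_gen_FcfII_ftnk_gen_k_idents} the quantity $\FcfII{f(t)}{n+1}{k}$ is the elementary symmetric polynomial $e_{n+1-k}(x_{1}, \ldots, x_{n})$ in the variables $x_{i} = f(i)\,t^{-i}$, and classifying each $(n+1-k)$-element index subset by the largest index $j$ it omits partitions the monomials of $e_{n+1-k}$, the block for a given $j$ summing to $\FcfII{f(t)}{j}{k-1}\,\prod_{i>j} x_{i}$; identifying $\prod_{i>j} x_{i} = P_{n}/P_{j}$ then recovers the identity term by term.
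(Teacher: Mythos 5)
Your proposal is correct and follows essentially the same route as the paper: the $k=1$ case by induction/telescoping on \eqref{eqn_genS1ft_rec_def}, and the $k\geq 2$ case by viewing the defining recurrence as a first-order linear recurrence in the upper index and solving it with a summing factor (the paper states the generic closed form of such a recurrence; you carry out the equivalent telescoping explicitly, and you are in fact more careful about the initial value $\FcfII{f(t)}{1}{k}=0$ than the paper's generic statement with $f_k(0)=1$). The supplementary symmetric-function check via \eqref{eqn_gen_FcfII_ftnk_gen_k_idents} is a nice independent verification but does not change the character of the argument.
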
 
\begin{proof} 
We begin by observing that by \eqref{eqn_genS1ft_rec_def} 
when $k \equiv 1$, we have that 
\begin{align*} 
\gkpSI{n+1}{1}_{f(t)} & = \frac{f(n)}{t^n} \gkpSI{n}{1}_{f(t)} + 
     \gkpSI{n}{0}_{f(t)} \\ 
     & = 
     \frac{f(n)}{t^n} \gkpSI{n}{1}_{f(t)} + \Iverson{n = 0}, 
\end{align*} 
which implies the first claim by induction since 
$\gkpSI{1}{1}_{f(t)} = 1$ and $\gkpSI{0}{1}_{f(t)} = 1$. 
To prove the column-wise recurrence relation given in 
\eqref{eqn_FcfII_ftnk_spcase_cols_and_rdefs}, 
we notice again by induction that for any functions 
$g(n)$ and $b(n) \neq 0$, the sequence, $f_k(n)$, defined recursively by 
\begin{align*} 
f_k(n) & = 
     \begin{cases} 
        b(n) \cdot f_k(n-1) + g(n-1) & \text{ if $n \geq 1$ } \\ 
        1 & \text{ if $n = 0$, } 
     \end{cases} 
\end{align*} 
has a closed-form solution given by 
\begin{align*} 
f_k(n) & = \left(\prod_{j=1}^{n-1} b(j)\right) \times \sum_{0 \leq j < n} 
     \frac{g(j)}{\prod_{i=1}^{j} b(j)}. 
\end{align*} 
Thus by \eqref{eqn_genS1ft_rec_def} the second claim is true. 
\end{proof} 

\subsection{Generalized forms of the Stirling convolution polynomials} 

\begin{definition}[Stirling Polynomial Analogs] 
\label{def_CvlPolyAnalogs}
For $x,n,x-n \geq 1$, we suggest the next two variants of the 
generalized \emph{Stirling convolution polynomials}, denoted by 
$\sigma_{f(t),n}(x)$ and $\widetilde{\sigma}_{f(t),n}(x)$, respectively, 
as the right-hand-side coefficient definitions in the following equations: 
\begin{align} 
\label{eqn_fnx_poly_coeff_def} 
\sigma_{f(t),n}(x) := \FcfII{f(t)}{x}{x-n} \frac{(x-n-1)!}{x!_{f}} 
     & \quad \iff \quad 
\FcfII{f(t)}{n+1}{k} = \frac{(n+1)!_{f}}{(k-1)!}\ \sigma_{f(t),n+1-k}(n+1) \\ 
\notag 
\widetilde{\sigma}_{f(t),n}(x) := \FcfII{f(t)}{x}{x-n} \frac{(x-n-1)!}{x!} 
     & \quad \iff \quad 
\FcfII{f(t)}{n+1}{k} = \frac{(n+1)!}{(k-1)!}\ \widetilde{\sigma}_{f(t),n+1-k}(n+1). 
\end{align} 
\end{definition} 

\begin{prop}[Recurrence Relations] 
For integers $x,n,x-n \geq 1$, the analogs to the Stirling convolution 
polynomial sequences defined by \eqref{eqn_fnx_poly_coeff_def} each 
satisfy a respective recurrence relation stated in the next equations. 
\begin{align} 
\notag 
f(x+1) \sigma_{f(t),n}(x+1) & = (x-n) \sigma_{f(t),n}(x) + 
     f(x)\ t^{-x} \cdot \sigma_{f(t),n-1}(x) + \Iverson{n = 0} \\ 
\label{eqn_fnx_snx_genCvlPolySeqs_recs} 
(x+1) \widetilde{\sigma}_{f(t),n}(x+1) & = (x-n) \widetilde{\sigma}_{f(t),n}(x) + 
     f(x)\ t^{-x} \cdot \widetilde{\sigma}_{f(t),n-1}(x) + \Iverson{n = 0} 
\end{align} 
\end{prop}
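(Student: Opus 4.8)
The plan is to reduce both recurrences directly to the defining recurrence \eqref{eqn_genS1ft_rec_def} for the generalized Stirling numbers of the first kind by substituting the coefficient definitions from \eqref{eqn_fnx_poly_coeff_def}. Since the two claimed identities differ only in whether the normalizing denominator is the $f$-factorial $x!_f$ or the ordinary factorial $x!$, I will carry out the argument in full for $\sigma_{f(t),n}(x)$ and then indicate the single modification needed for $\widetilde{\sigma}_{f(t),n}(x)$.

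First I would write out the left-hand quantity using the definition, namely $\sigma_{f(t),n}(x+1) = \FcfII{f(t)}{x+1}{x+1-n}\frac{(x-n)!}{(x+1)!_{f}}$, and then apply \eqref{eqn_genS1ft_rec_def} once with upper index $x+1$ and lower index $x+1-n$. Because $x \geq 1$ forces $x+1 \neq 0$, the boundary term $\Iverson{n = k = 0}$ in \eqref{eqn_genS1ft_rec_def} vanishes, and the recurrence yields $\FcfII{f(t)}{x+1}{x+1-n} = f(x)\,t^{-x}\FcfII{f(t)}{x}{x+1-n} + \FcfII{f(t)}{x}{x-n}$. The key bookkeeping step is then to split the denominator as $(x+1)!_{f} = f(x+1)\cdot x!_{f}$, so that multiplying through by $f(x+1)$ clears this factor and leaves the two Stirling-number terms each weighted by $\frac{(x-n)!}{x!_{f}}$.

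The remaining task is to recognize these two terms as the right-hand side of the claimed recurrence. For the second term I would use the factorial absorption $(x-n)\cdot(x-n-1)! = (x-n)!$ to identify $\FcfII{f(t)}{x}{x-n}\frac{(x-n)!}{x!_{f}} = (x-n)\,\sigma_{f(t),n}(x)$. For the first term I would use the index shift $x+1-n = x-(n-1)$ together with the matching factorial $\frac{(x-n)!}{x!_{f}}$ coming from the definition of $\sigma_{f(t),n-1}(x)$ to identify $f(x)\,t^{-x}\FcfII{f(t)}{x}{x+1-n}\frac{(x-n)!}{x!_{f}} = f(x)\,t^{-x}\,\sigma_{f(t),n-1}(x)$. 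Assembling these gives the first identity in \eqref{eqn_fnx_snx_genCvlPolySeqs_recs}; within the stated range $n \geq 1$ the boundary decoration $\Iverson{n = 0}$ is identically zero and need not be analyzed separately.

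For the tilde variant I expect that no new ideas are required: the only change is that the normalizing factor is $(x+1)! = (x+1)\cdot x!$ rather than $(x+1)!_{f}$, so multiplying through by $(x+1)$ in place of $f(x+1)$ clears the denominator, and the identical index-shift and factorial-absorption identities then produce $(x-n)\widetilde{\sigma}_{f(t),n}(x) + f(x)\,t^{-x}\,\widetilde{\sigma}_{f(t),n-1}(x)$. The main obstacle here is purely notational rather than conceptual, namely keeping the two denominators ($f$-factorial versus ordinary factorial) and the shifted lower indices straight; there is no genuine analytic difficulty, since everything follows from a single application of \eqref{eqn_genS1ft_rec_def} and elementary manipulation of factorials.
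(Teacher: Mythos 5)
Your proposal is correct and follows essentially the same route as the paper: a single application of the defining recurrence \eqref{eqn_genS1ft_rec_def} at upper index $x+1$ and lower index $x+1-n$, followed by the factorial absorption $(x-n)(x-n-1)!=(x-n)!$ and the index shift $x+1-n=x-(n-1)$. The only cosmetic difference is that the paper works out the $\widetilde{\sigma}$ case in detail and derives the $\sigma$ case from it, whereas you do the reverse; the substance is identical.
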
 
\begin{proof} 
We give a proof of the second identity since the first recurrence follows 
almost immediately from this result. 
Let $x,n,x-n \geq 1$ and consider the expansion of the left-hand-side of 
\eqref{eqn_fnx_snx_genCvlPolySeqs_recs} according to 
Definition \ref{def_CvlPolyAnalogs} as follows: 
\begin{align*} 
(x + 1) \widetilde{\sigma}_{f(t),n}(x + 1) & = 
     \gkpSI{x+1}{x+1-n}_{f(t)} \frac{(x-n)!}{x!} \\ 
     & = 
     \left(f(x) t^{-x} \gkpSI{x}{x+1-n}_{f(t)} + \gkpSI{x}{x-n}_{f(t)} 
     \right) (x-n) \cdot \frac{(x-n-1)!}{x!} \\ 
     & = 
     (x-n) \widetilde{\sigma}_{f(t),n}(x) + 
     f(x) t^{-x} \cdot \widetilde{\sigma}_{f(t),n-1}(x). 
\end{align*} 
For any non-negative integer $x$, when $n = 0$, we see that 
$\gkpSI{x+1}{x+1}_{f(t)} \equiv 1$, which implies the result. 
\end{proof} 

\begin{remark}[A Comparison of Polynomial Generating Functions] 
The generating functions for the Stirling convolution polynomials, 
$\sigma_n(x)$, and the $\alpha$-factorial polynomials, $\sigma_n^{(\alpha)}(x)$, 
from \citep{MULTIFACTJIS} each have the comparatively simple 
special case closed-form generating functions given by 
\begin{align} 
\label{eqn_SPoly_def_and_GF}
x \sigma_n(x) & = \gkpSI{x}{x-n} \frac{(x-n-1)!}{(x-1)!} = 
     [z^n] \left(\frac{z e^{z}}{e^{z}-1}\right)^{x} && 
     \text{ for } (f(n), t) \equiv (n, 1) \\ 
\notag 
x \sigma_n^{(\alpha)}(x) & = \gkpSI{x}{x-n}_{\alpha} \frac{(x-n-1)!}{(x-1)!} = 
     [z^n] e^{(1-\alpha)z} 
     \left(\frac{\alpha z e^{\alpha z}}{e^{\alpha z}-1}\right)^{x} && 
     \text{ for } (f(n), t) \equiv (\alpha n + 1 - \alpha, 1) \\ 
\notag 
x \sigma_n^{(\alpha; \beta)}(x) & = \gkpSI{x}{x-n}_{(\alpha; \beta)} 
     \frac{(x-n-1)!}{(x-1)!} = 
     [z^n] e^{\beta z} 
     \left(\frac{\alpha z e^{\alpha z}}{e^{\alpha z}-1}\right)^{x} && 
     \text{ for } (f(n), t) \equiv (\alpha n + \beta, 1). 
\end{align} 
The Stirling polynomial sequence in \eqref{eqn_SPoly_def_and_GF} is a 
special case of a more general class of \emph{convolution polynomial} 
sequences defined by Knuth in his article \citep{CVLPOLYS}. 

These polynomial sequences are defined by a general sequence of 
coefficients, $s_n^{\ast}$ with $s_0^{\ast} = 1$, 
such that the corresponding polynomials, $s_n(x)$, are 
enumerated by the power series over the original sequence as 
\begin{equation*} 
\sum_{n=0}^{\infty} s_n(x) z^n := S(z)^{x} \equiv 
     \left(1 + \sum_{n=1}^{\infty} s_n^{\ast} z^n\right)^{x}. 
\end{equation*} 
Polynomial sequences of this form satisfy a number of interesting 
properties, and in particular, the next identity provides a 
generating function for a variant of the 
original convolution polynomial sequence over $n$ when 
$t \in \mathbb{C}$ is fixed. 
\begin{equation} 
\label{eqn_CvlPoly_Stz_GF_rdef} 
\mathcal{S}_t(z) := S\left(z \mathcal{S}_t(z)^t\right) 
     \quad \implies \quad 
\frac{x s_n(x+tn)}{(x+tn)} = [z^n] \mathcal{S}_t(z)^{x} 
\end{equation} 
This result is also useful in expanding many identities 
for the $t := 1$ case as given for the Stirling polynomial case in 
\citep[\Section 6.2]{GKP} \citep{CVLPOLYS}. 
A related generalized class of polynomial sequences is considered in 
Roman's book defining the form of 
\emph{Sheffer polynomial} sequences. \nocite{UC} 
The polynomial sequences of this particular type, say with sequence 
terms given by $s_n(x)$, satisfy the form in the following 
generating function identity where $A(z)$ and $B(z)$ are 
prescribed power series satisfying the initial conditions from the 
reference \citep[\cf \Section 2.3]{UC}: 
\begin{equation*} 
\sum_{n=0}^{\infty} s_n(x) \frac{z^n}{n!} := A(z) e^{x B(z)}. 
\end{equation*} 
For example, the form of the generalized, or higher-order Bernoulli 
polynomials (numbers) is a parameterized sequence whose generating 
function yields the form of many other special case sequences, 
including the Stirling polynomial case defined in equation 
\eqref{eqn_SPoly_def_and_GF} \citep[\cf \Section 4.2.2]{UC} 
\citep[\cf \Section 5]{MULTIFACTJIS}. 
\end{remark} 

\subsubsection*{An experimental procedure towards evaluating the 
                generalized polynomials} 

We expect that the generalized convolution polynomial analogs defined in 
\eqref{eqn_fnx_poly_coeff_def} above form a 
sequence of finite-degree polynomials in $x$, for example, as in the 
Stirling polynomial case when we have that 
\begin{align*} 
\gkpSI{x}{x-n} & = \sum_{k \geq 0} \gkpEII{n}{k} \binom{x+k}{2n}, 
\end{align*} 
where $\gkpEII{n}{k}$ denotes the special triangle of 
\emph{second-order Eulerian numbers} for $n, k \geq 0$ and where the 
binomial coefficient terms in the previous equations each have a finite-degree
polynomial expansion in $x$ \citep[\S 6.2]{GKP}. 
The previous identity also allows us to extend the 
Stirling numbers of the first kind to \emph{arbitrary} real, 
or complex-valued inputs. 

Given the 
relatively simple and elegant forms of the generating functions that 
enumerate the polynomial sequences of the special case forms in 
\eqref{eqn_SPoly_def_and_GF}, it seems natural to 
attempt to extend these relations to the generalized polynomial 
sequence forms defined by \eqref{eqn_fnx_poly_coeff_def}. 
However, in this more general context we appear to have a 
stronger dependence of the form and ordinary generating functions of these 
polynomial sequences on the underlying function $f$. 
Specifically, for the form of the first sequence in 
\eqref{eqn_fnx_poly_coeff_def}, 
we suppose that the function $f(n)$ is arbitrary. 

Based on the first several cases of these polynomials, 
it appears that the generating function for the sequence can be 
expanded as 
\begin{align} 
\label{eqn_fnx_poly_GF_ident_v1} 
 & f_n(x) := [z^n] F(z)^{x} \quad \text{ where } \quad 
     F(z) := \sum_{n=0}^{\infty} g_n(x) z^n \\ 
\notag 
 & \phantom{f_n(x) :} \implies  
     g_n(x) = \frac{\sum_{j=0}^{n-1} f(x)^n \numpoly_n(j;\ x) x^{n-1-j} 
     (1+x)^j f(x+1)^j}{n!\ t^{nx}\ \sum_{j=0}^{2n-1} 
     \denompoly_n(j;\ x) x^{2n-1-j} 
     (1+x)^j f(x+1)^j}\ \Iverson{n \geq 1} + \Iverson{n = 0} 
\end{align} 
where the forms $\numpoly_n(j;\ x)$ and $\denompoly_n(j;\ x)$ denote 
polynomial sequences of finite non--negative integral degree indexed over the 
natural numbers $n, j \geq 0$. 
Similarly it has been verified for the first $16$ of each $n$ and $k$ 
that the following equation holds where the 
terms $g_n(x)$ involved in the series for $F(z)$ are defined through the 
form of the last equation. 
\begin{equation*} 
s_n(k) := f_{n-k}(n) \implies s_n(k) = [z^n] z^k F(z)^n = 
     \sum_{j=1}^{n-k} \binom{n}{j} [z^{n-k}] (F(z) - 1)^j + \Iverson{n = k} 
\end{equation*} 
Note that the coefficients defined through these implicit power series forms 
must also satisfy an implicit relation to the particular values of the 
polynomial parameter $x$ as formed through the last equations, which is 
much different in construction than in the cases of the 
special polynomial sequence generating functions remarked on above. 
Other different expansions may result for special cases of the 
function $f(n)$ and explicit values of the parameter $t$. 

\section{Conclusions and future research} 
\label{Section_Concl}

\subsection{Summary} 

We have defined a generalized class of factorial product functions, 
$(x)_{f(t),n}$, that generalizes the forms of many special and symbolic 
factorial functions considered in the references. 
The coefficient-wise symbolic polynomial expansions of these 
$f$-factorial function variants define generalized triangles of 
Stirling numbers of the first kind which share many analogs to the 
combinatorial properties satisfied by the ordinary combinatorial 
triangle cases. 
Surprisingly, many inversion relations and other finite sum properties 
relating the ordinary Stirling number triangles are not apparent by 
inspection of these corresponding sums in the most general cases. 
A study of ordinary Stirling-number-like sums, inversion relations, and 
generating function transformations is not contained in the article. 
We pose formulating these analogs in the most general coefficient cases as a 
topic for future combinatorial work with the generalized Stirling number 
triangles defined in 
Section \ref{subSection_Intro_GenSNumsDefs}. 

\subsection{Topics suggested for future research} 

Another new avenue to explore with these sums and the generalized $f$-zeta 
series transformations motivated in \citep{GFTRANS2016,GFTRANSHZETA2016} is 
to consider finding new identities and expressions for the 
Euler-like sums suggested by the generalized identity in 
Proposition \ref{prop_FHNum_fnal_eqn_and_coeff_exps} and by the 
special case expansions for the Stirling numbers of the first kind given in 
Example \ref{example_SpCase_S1HNum_FnalEqn_Ident}. 
In particular, 
if we define a class of so-termed ``\emph{$f$-zeta}'' functions, 
$\zeta_f(s) := \sum_{n \geq 1} f(n)^{-s}$, we seek analogs to these infinite 
Euler sum variants expanded through $\zeta_f(s)$ just as the Euler sums are 
expressed through sums and products of the \emph{Riemann zeta function}, 
$\zeta(s)$, in the ordinary cases from \citep{STIRESUMS}. 

For example, it is well known that for real-valued $r > 1$ 
\begin{align*} 
\sum_{n \geq 1} \frac{H_n^{(r)}}{n^r} & = 
     \frac{1}{2}\left(\zeta(r)^2 + \zeta(2r)\right), 
\end{align*} 
and moreover, summation by parts shows us that for any real 
$r > 1$ and any $t \in \mathbb{C}^{\ast}$ such that we have a 
convergent limiting zeta function series we have that 
\begin{align*} 
\sum_{n \geq 1} \frac{F_n^{(r)}(t^r) t^{rn}}{f(n)^{r}} & = 
     \lim_{n\longrightarrow\infty}\ \left\{ 
     \left(F_n^{(r)}(t^r)\right)^2 - \sum_{0 \leq j < n} 
     \frac{F_j^{(r)}(t^r) t^{r(j+1)}}{f(j+1)^r} 
     \right\} \\ 
     & = \lim_{n\longrightarrow\infty}\ \left\{ 
     \left(F_n^{(r)}(t^r)\right)^2 - \sum_{0 \leq j < n} 
     \frac{F_{j+1}^{(r)}(t^r) t^{r(j+1)}}{f(j+1)^r} + 
     \sum_{0 \leq j < n} \frac{t^{2r(j+1)}}{f(j+1)^{2r}} 
     \right\}, 
\end{align*} 
which similarly implies that 
\begin{align*} 
\sum_{n \geq 1} \frac{F_n^{(r)}(1)}{f(n)^r} & 
     \quad \overset{: \rightsquigarrow}{\longrightarrow} \quad 
     \frac{1}{2}\left(\zeta_f(r)^2 + \zeta_f(2r)\right). 
\end{align*} 
Additionally, we seek other analogs to known identities for the infinite 
Euler-like-sum variants over the weighted $f$-harmonic number sums of the form 
\begin{align*} 
H_f\left(\varpi_1, \ldots, \varpi_k; s, t, z\right) & := \sum_{n \geq 1} 
     \frac{F_n^{\left(\varpi_1\right)}\left(t^{\varpi_1}\right) \cdots 
     F_n^{\left(\varpi_k\right)}\left(t^{\varpi_k}\right) z^{sn}}{f(n)^{s}}, 
\end{align*} 
when $t = \pm 1$, or more generally for any fixed $t \in \mathbb{C}^{\ast}$, 
and where the right-hand-side series in the previous equation 
converges, say for $|z| \leq 1$. 

\renewcommand{\refname}{References}

\bigskip\hrule\bigskip 


\begin{thebibliography}{10}

\renewcommand{\em}{\it} 

\bibitem{STIRESUMS}
V.~Adamchik, On {S}tirling numbers and {E}uler sums, {\em J. Comput. Appl.
  Math.} {\bf 79} (1997),  119--130.

\bibitem{CK}
C.~Charalambides and M.~Koutras, On the differences of the generalized
  factorials at an arbitrary point and their combinatorial applications, {\em
  Discrete Math.} {\bf 47} (1983),  183--201.

\bibitem{q-DIFFSq-FACTS}
C.~A. Charalambides, On the $q$--differences of the generalized
  $q$--factorials, {\em J. Statist. Plann. Inference} {\bf 54} (1996),  31--43.

\bibitem{ADVCOMB}
L.~Comtet, {\em Advanced Combinatorics: The Art of Finite and Infinite
  Expansions}, D. Reidel Publishing Company, 1974.

\bibitem{GKP}
R.~L. Graham, D.~E. Knuth, and O.~Patashnik, {\em Concrete Mathematics: A
  Foundation for Computer Science}, Addison-Wesley, 1994.

\bibitem{GAMMA}
J.~Havil, {\em Gamma: Exploring {E}uler's Constant}, Princeton University
  Press, 2003.

\bibitem{CVLPOLYS}
D.~E. Knuth, Convolution polynomials, {\em The Mathematica J.} {\bf 2} (1992),
  67--78.

\bibitem{LOEBBINOM}
D.~E. Loeb, A generalization of the binomial coefficients, 1995, 
  \url{https://arxiv.org/abs/math/9502218}. 

\bibitem{q-SHIFTEDFACTS}
R.~J. McIntosh, Some asymptotic formulae for $q$--shifted factorials, {\em
  {R}amanujan J.} {\bf 3} (1999),  205--214.

\bibitem{NISTHB}
F. W.~J. Olver, D.~W. Lozier, R.~F. Boisvert, and C.~W. Clark,
  {\em {NIST} Handbook of Mathematical Functions}, Cambridge
    University Press, 2010.

\bibitem{COMBIDENTS}
J.~Riordan, {\em Combinatorial Identities}, John Wiley and Sons, Inc., 1968.

\bibitem{UC}
S.~Roman, {\em The Umbral Calculus}, Dover, 1984.

\bibitem{MULTIFACTJIS}
M.~D. Schmidt, Generalized $j$--factorial functions, polynomials, and
  applications, {\em J. Integer Seq.} {\bf 13} (2010).

\bibitem{MULTIFACT-CFRACS}
M.~D. Schmidt, {J}acobi-type continued fractions for the ordinary generating
  functions of generalized factorial functions, {\em J. Integer Seq.} 
  {\bf 20} (2017).

\bibitem{SQSERIESMDS}
M.~D. Schmidt, Square series generating function transformations, 2016, 
  \url{https://arxiv.org/abs/1609.02803}. 

\bibitem{GFTRANSHZETA2016}
M.~D. Schmidt, Zeta series generating function transformations related to
  generalized {S}tirling numbers and partial sums of the {H}urwitz zeta
  function, 2016, 
  \url{https://arxiv.org/abs/1611.00957}. 

\bibitem{GFTRANS2016}
M.~D. Schmidt, Zeta series generating function transformations related to
  polylogarithm functions and the $k$--order harmonic numbers, 2016, 
  \url{https://arxiv.org/abs/1610.09666}. 

\end{thebibliography}
\end{document}